\newcommand{\norm}[1]{\left\lVert #1 \right\rVert}
\def\R{{\mathbb R}}
\newcommand{\mPP}{\mathsf{\Phi}}
\DeclareMathOperator*{\argmax}{argmax}
\newtheorem{theorem}{Theorem}[section]
\newtheorem{proposition}[theorem]{Proposition}
\newtheorem{remark}[theorem]{Remark}
\title{Greedy techniques for inverse problems} 
\author{L. Bruni Bruno$^{1}$, P. Massa$^{2}$, E. Perracchione$^{1}$, M. Trombini$^{1}$\\
$^1$ Dipartimento di Scienze Matematiche \lq\lq Giuseppe Luigi Lagrange\rq\rq, {Politecnico di Torino}, {Italy}\\
$^2$ University of Applied Sciences and Arts Northwest Switzerland (FHNW), School of Computer Science, Switzerland}
\date{\today}
\begin{document}
\maketitle
\textbf{Abstract.} 
Inverse imaging problems rely on limited and indirect measurements, making reconstruction highly dependent on both regularization and sample locations. We introduce a novel greedy framework for the \emph{optimal} selection of indirect measurements in the operator codomain, specifically tailored to inverse problems. Our approach employs a two-step scheme combining kernel-based interpolation and extrapolation. Within this framework, greedy schemes can be residual-based, where points are selected according to the current approximation error for a specific target function, or error-based, where points are chosen using a priori error indicators independent of the residual. For the latter, we derive explicit error bounds that quantify the propagation of approximation errors through both interpolation and extrapolation. Numerical applications to solar hard X-ray imaging demonstrate that the proposed greedy sampling strategy achieves high-quality reconstructions using only a few available measurements.

\section{Introduction}

In many applications, such as the reconstruction of medical \cite{lingala2011accelerated,ravishankar2015efficient} and astronomical \cite{prevost2022hyperspectral,piana2022hard} images, we deal with indirectly collected data. These data are captured by sensors that provide numerical information that is then used to reconstruct an image of the underlying physical phenomenon. Typically, not only data are rather limited compared to the number of pixels of the output image, but they also consist of indirect measurements. This leads to an ill-posed inverse problem (in the sense of Hadamard) and thus necessitates regularization  \cite{engl1996regularization,bertero2006inverse,Guastavino20203504}. 

In this context, the performance of image reconstruction methods depends not only on the regularization technique but also on the number and location of the available indirect measurements. The novelty of our work lies in developing algorithms for selecting an \emph{optimal} sampling in the operator codomain. We achieve this by leveraging tools from approximation theory, namely greedy methods \cite{SDW,doi:10.1137/23M1587956,WH2013,WENZEL2021105508} tailored to the \emph{inversion task}, that are capable of identifying optimal inverse sampling.

Given a set of (possibly) indirect measurements sampled at multivariate scattered points, a greedy scheme consists in considering all but a few data as test set. The data left out are used as training set to construct an initial rough model that is then refined by incrementally adding
the most relevant sample, picked among the test set, according to some iterative rule.  This rule might be based on the residuals, meaning that the greedy
scheme adds at each iteration the point at which the difference between the observed and approximated quantity is maximum, and hence the so constructed greedy
scheme is totally task dependent, or it can rely on a priori error indicators independent of the residual. In particular, residual-based methods are extremely powerful when a specific function is the target of the reconstruction, whereas error-based methods
apply to larger classes of objects, of course at the (possible) price of being
less sharp.

As we will point out, the first class of schemes can be trivially adapted to the inverse framework by applying forward and backward operators to the discrete data,  while to provide reliable  bounds  for error-based schemes, we restrict to specific imaging procedure, and precisely to interpolation and extrapolation algorithms (see e.g. \cite{duval2018solar,Perracchione2023}). This leads to the following two-step scheme:
\begin{enumerate}
    \item[(i)]  interpolating the greedily selected data;
    \item[(ii)] applying a regularized inversion method to the outcome of (i) to get an approximation of the inverse problem solution.
\end{enumerate}
A key point in (i) is the correct selection of the location of the physical data. Thus, the identification of the \emph{interpolation points} (or, more generally, supports \cite{brunisisc}) is carried via a greedy method. Despite we mainly focus on meshfree kernel-based interpolation schemes \cite{perracchione2021feature}, it is rather the basis of the projection space, in combination with the greedy method, that allows for estimates and error bounds on the reconstruction. 
Hence, among the variety of methods that can be applied for (ii), it is thus convenient to choose a regularization method for which an estimate on the propagation of the error can be given, so that the two contributions can be appropriately combined. This is the case, for instance, of linear methods.

The technique we develop in this paper hence lies at the (narrow) intersection between interpolation theory and regularization theory. The first part is required to set up the greedy method, while the second one is involved in the image reconstruction. The intersection is narrow in the following sense: interpolation theory usually deals with uniform approximation; as a consequence, its most renowned indicator, known as \emph{Lebesgue constant} \cite{Brutman}, is peculiar to the sup-norm. The sup-norm clearly gives a Banach structure, but is far from a Hilbert structure. 
In contrast, classical regularization theory in Banach spaces \cite{RegularizationBanach} is not compatible with the framework we adopted in this paper as it yields operators that are continuous only with respect to weak topologies. 
This is what really forces us to rely on kernel methods, as they naturally bear Hilbert structures.

After investigating error estimates for the construction of greedy points, we numerically validate our theoretical claims by considering an application to solar hard X-ray imaging. 
Specifically, we consider simulated data provided by the Spectrometer/Telescope for Imaging X-rays (STIX) \cite{refId0} on board the ESA Solar Orbiter mission. Such telescope provides
observations (called visibilities) made of sampled Fourier components of the
photon flux released by solar flares. The image of the flux emitted during such explosions will be then recovered by selecting only a few measurements with our method. 

\textbf{Outiline of the paper.} The paper is organized as follows. In Section \ref{sect:scheme}, we present our scheme and review the basics of greedy methods and algorithms. In Section \ref{sect:interpolation} we describe the interpolation step and how it relates to the inverse problem given by the image reconstruction. Thus, we obtain the error estimates for the entire reconstruction process. Finally, in Section \ref{sect:numericalresult}, we present some numerical experiments that demonstrate the effectiveness of our method when applied to the astronomical framework. Conclusions are offered in Section \ref{sect:conclusion}.

\section{The scheme} \label{sect:scheme}

The abstract framework of this work consists in reconstructing a target function $x$ which is subjected to the action of a linear injective operator $A \in \mathcal L (\mathcal X, \mathcal Y) $ from two functional Banach spaces $\mathcal{X}$ and $ \mathcal Y $. 
More definitely, we focus on 
\begin{equation}
\label{eq:eq_pb_inv}
Ax=y, \quad \textrm{s.t.} \quad  y_{| \Xi} = Y
\end{equation}
being $Y\coloneqq\{y_i=y(\xi_i)\}_{i=1}^{N}$ a set of functional values sampled at a data set  $\Xi\coloneqq\{\xi_i\}_{i=1}^{N}$. 
In order to introduce the concept of greedy methods in the framework of inverse problems, we denote by 
$M_{\Xi,x} \in \mathcal X $ an approximation of the sought solution $x$. We assume that $M_{\Xi,x} $ admits point-wise evaluation.

Given $\Xi$ and $Y$, the main goal of greedy algorithms consists in selecting a suitable subset $\tilde{\Xi} \subset \Xi$ so that the {\em greedy model} $M_{\tilde \Xi,x}$ is constructed on a reduced number of data producing a surrogate model for $M_{\Xi,x}$ that approximates $x$. Such iterative algorithms belong essentially to two classes:
\begin{itemize}
    \item Residual-based greedy methods: the set $\tilde{\Xi}$ is constructed taking into account the functional values $y_i$, $i=1,\ldots,N$. 
    \item Error-based greedy methods: the set $\tilde{\Xi}$ is built independently of the functional values $y_i$, $i=1,\ldots,N$.
\end{itemize} 

Both greedy schemes, as well as their combination \cite{doi:10.1137/23M1587956,DUTTA2021110378}, are of interest. In particular, residual-based methods are target dependent, while error-based methods apply to larger classes of targets at the price of being less accurate for specific objects.

\subsection{Residual-based greedy methods}

We consider an initial (training) data set ${\tilde \Xi} = \{\xi_1\}$, without loss of generality. Then, given $Y$ and a fixed tolerance $\tau$, the residual-based greedy scheme for inverse problems is summarized in Algorithm \ref{alg:alg1}. 

\begin{algorithm}[H]
\small
\caption{\textbf{Pseudo-code of the residual-based greedy algorithm for inverse problems}}
\begin{algorithmic}[1]
\STATE Take an initial set of data ${\tilde \Xi} = \{ \xi_1 \}$. \vskip 0.1cm
\STATE Compute an initial approximation $M_{\tilde{\Xi},x}$. \vskip 0.1cm
\STATE While $\max_{\xi_i \in \Xi\setminus {\tilde \Xi}} \vert y(\xi_i)-(AM_{\tilde{\Xi},x})(\xi_i) \vert> \tau $: \vskip 0.1cm
    \begin{enumerate}    
    \item[(i)] Define $\xi^*= \argmax_{\xi_i \in \Xi\setminus {\tilde \Xi}} \vert y(\xi_i)-(AM_{\tilde{\Xi},x})(\xi_i) \vert$. \vskip 0.1cm
    \item[(ii)] Set ${\tilde \Xi}= {\tilde \Xi}\cup \{\xi^*\}$. \vskip 0.1cm
    \item[(iii)] Re-Compute $M_{\tilde{\Xi},x}$.
\end{enumerate}
 		\end{algorithmic}
	\label{alg:alg1}
\end{algorithm}

If no tolerance is used, i.e., all points are selected, the step at which the points are selected gives a ranking for the measurements $(\xi_i,y(\xi_i))$, $i=1,\ldots,N$. This is not frequent, as the cardinality of $\tilde{\Xi}$, namely $n$, is usually so that $n \ll N$. Residual-based greedy procedures are not significantly subjected to the initial input; in fact, if $ \Xi $ is large enough, numerical experiments show that the sensitivity to the selection of $ \xi_1 $ is negligible. It is finally plain to observe that, equivalently, one may prescribe the number of samples $ n $ in place of the tolerance $ \tau $.
We will consider the latter approach in our numerical experiments.

To conclude, we note that the residual-based greedy algorithms apply to any imaging method, as Algorithm \ref{alg:alg1} works for any  approximation of the sought solution $M_{\tilde{\Xi},x}$. This adaptivity might anyway backfire as the computation of the selected data must be done for each (possible) method and for each set of inverse measurements $y_{| \Xi}$. To avoid this drawback, we introduce the error-based greedy scheme that takes the advantage of being independent on the inverse samples. The price to pay is that the approximation $M_{\tilde{\Xi},x}$ will belong to a specific class of imaging techniques. 

\subsection{Error-based greedy methods}

Letting $\bar{y}$ be an approximation of $y$ (we will give a formal definition to this in the sequel), the basic idea of the error-based greedy is to find bounds of the general form: 
\begin{equation} \label{eq0}
 \Vert y - AM_{\tilde{\Xi},x}
\Vert_{\mathcal Y} \leq
{\mathcal{E}(\widetilde \Xi,{\bar y})} \cdot {\mathcal E (y,\bar{y},A}).
\end{equation}
In the above formula, we deliberately do not specify in which normed spaces the errors are computed, as this will be clarified later. Right now, the main message is that we have to split the error from contributions of different essence: ${\mathcal{E}}(\widetilde \Xi,{\bar{y}})$ depends on the sampling points, and on the method used to construct $\bar{y}$. In contrast, ${\mathcal{E}(y,\bar{y},A)}$ contains the remaining part of the error, which essentially depends (again) on the approximation method used to construct $\bar{y}$ and the associated residual, and on the forward operator $A$. The key fact is that the first term does not depend on $y$.

In order to further investigate the contribution of $\mathcal{E}(\widetilde \Xi, {\bar y})$, we need to represent ${\bar y}$. A possible choice consists in approximating $y$ by means of a projector, i.e., we consider 
$ \Pi y(\xi) = \sum_{i=1}^n c_i b_i(\xi) $, being $\mathcal{B} \coloneqq \textrm{span}\{b_i(\cdot)\}_{i=1}^n $ 
an appropriate set of linearly independent functions. 
The coefficients $ \{ c_i \}_{i=1}^n $ are the solution of the interpolation problem 
\begin{equation}\label{matrix_interp}
    \mPP 
    \begin{pmatrix} c_1 \\ \vdots \\ c_n \end{pmatrix}
    = \begin{pmatrix} y_1 \\ \vdots \\ y_n \end{pmatrix},
\end{equation}
where $\mPP_{ij} = b_j(\xi_i)$, $i,j=1,\ldots,n$; note that they can be uniquely determined provided that the system is non-singular, i.e. when $ \det \mPP \ne 0 $. 
Since $ \bar{y}$ is obtained by projection, it is fully determined by the data sites and the basis ${\cal B}$. 
We may hence substitute the dependence on $\bar y$ in \eqref{eq0} by ${\cal B}$.
This gives 
\begin{equation} \label{eq1}
 \Vert y - AM_{\tilde{\Xi},x}
\Vert_{\mathcal Y} \leq
{\mathcal{E}(\widetilde \Xi,{\cal B})} \cdot {\mathcal E (y,\mathcal{B},A}).
\end{equation}

Under the assumption that ${\mathcal{E}(\widetilde \Xi,{\cal B})}$ admits pointwise evaluations, we may summarize the error-based greedy algorithm as follows.

\begin{algorithm}[H]
\small
\caption{\textbf{Pseudo-code of the error-based greedy algorithm for inverse problems}}
\begin{algorithmic}[1]
\STATE Take an initial set of data $\tilde{\Xi} = \{ \xi_1 \}$. \vskip 0.1cm
\STATE While $\max_{\xi_i \in \Xi\setminus \tilde{\Xi}} {\mathcal{E}}(\xi_i,\tilde{\Xi},\mathcal{B}) > \tau $: \vskip 0.1cm
    \begin{enumerate}    
    \item[(i)] Define $\xi^*={\rm argmax}_{\xi_i \in \Xi\setminus \tilde{\Xi}} {\mathcal{E}}(\xi_i,\tilde{\Xi},\mathcal{B})$. \vskip 0.1cm
    \item[(ii)] Set $\tilde{\Xi}= \tilde{\Xi}\cup \{\xi^*\}$. \vskip 0.1cm
    \item[(iii)] Re-Compute ${\mathcal{E}}(\xi_i,\tilde{\Xi},\mathcal{B})$.
\end{enumerate}
 		\end{algorithmic}
	\label{alg:alg2}
\end{algorithm}

Comparing Algorithm \ref{alg:alg2} with Algorithm \ref{alg:alg1}, one sees that the structure is similar, but in the error-based version the dependence on the residual is avoided. For, it is reasonable to expect that Algorithm \ref{alg:alg2} will perform generally worse than Algorithm \ref{alg:alg1} on a specific prefixed problem, but will give results that are broadly applicable. As for the residual-based greedy algorithm, an equivalent possibility consists in setting the cardinality $ n $ of the training set in place of the tolerance $ \tau $.

\section{The interpolation step} \label{sect:interpolation}

A formal way for computing the coefficients of the expansion of $ \Pi y = \sum_{i=1}^n c_i b_i (\cdot) $ has been given in the above section; nevertheless, such an expansion neither has a clear relationship with the problem nor is practical. Things in fact change when one selects another equivalent basis for such a space.

\subsection{The cardinal basis and the Lebesgue function}
Consider a linear finite dimensional space $\mathcal{B} \coloneqq \textrm{span}\{b_i(\cdot)\}_{i=1}^n$. If the associated matrix $\mPP_{ij}=(b_j(\xi_i))_{i,j}$, $i,j=1,\ldots, n$, is non-singular, we may write $d_{\ell k}\coloneqq(\mPP^{-1})_{\ell k}$ and we have that the functions
\begin{align*}
    \psi_\ell(\xi)\coloneqq \sum_{k=1}^n d_{\ell k}b_k(\xi),\;\; \ell = 1,\ldots, n,
\end{align*}
 are a global Lagrange (or cardinal) basis as they are so that
\begin{equation*} 
\psi_\ell(\xi_i)
= \sum_{k=1}^n d_{\ell k} b_k(\xi_i) 
= \sum_{k=1}^n (\mPP^{-1})_{\ell k} \mPP_{k i} 
= \left(\mPP^{-1}\cdot \mPP \right)_{\ell i}
=\delta_{\ell i},
\end{equation*}
for $i,\ell = 1,\ldots,n$. 
Hence, 
we can represent the projector operator as
\begin{equation*}
    \Pi y(\xi) \coloneqq \sum_{i=1}^n y_i \psi_i(\xi).\;\;
\end{equation*}

Up to now, we have not specified in which normed spaces the errors in \eqref{eq1} are computed. Nevertheless, 
boundedness with respect to the sup-norm is relevant for the interpolation step. Suppose now that $ \mathcal B $ is generated by basis functions $ b_i \in L_{\infty} (\R^m) \cap C (\R^m) $. In such a case, the operator $\Pi: (L_{\infty} (\R^m) \cap C (\R^m), \Vert \cdot \Vert_\infty) \to (\mathcal{B}, \Vert \cdot \Vert_\infty) $ is bounded, and we may relate its norm with the Lebesgue constant of the space $ \mathcal{B} $ and the set $ \tilde{\Xi} $ at which samples are taken. The following is a classical result in approximation theory, which we slightly adapt to the present framework. 

\begin{proposition} \label{prop:norminterpolator}
    Let $ \tilde{\Xi} = \{ \xi_1, \ldots, \xi_n \}  $ be a unisolvent set of nodes for Lagrange interpolation in $ \mathcal{B} $, then 
    \begin{equation} \label{eq:characterizationLebesgue}
        \Vert \Pi \Vert_{\mathrm{op}(\Pi)} = \sup_{\xi \in \mathbb{R}^m} \lambda (\xi) = \sup_{\xi \in \mathbb{R}^m} \sum_{i=1}^n | \psi_i (\xi) | \eqqcolon \Lambda.
    \end{equation}
\end{proposition}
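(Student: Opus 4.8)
The plan is to establish the two inequalities $\Vert \Pi \Vert_{\mathrm{op}(\Pi)} \le \Lambda$ and $\Vert \Pi \Vert_{\mathrm{op}(\Pi)} \ge \Lambda$ separately, which is the standard route for identifying the norm of an interpolation operator with a Lebesgue constant. For the upper bound I would start from the cardinal representation $\Pi y(\xi) = \sum_{i=1}^n y(\xi_i)\,\psi_i(\xi)$ just established above the statement. For any admissible $y$ and any fixed $\xi \in \mathbb{R}^m$, the triangle inequality gives
\begin{equation*}
    |\Pi y(\xi)| \le \sum_{i=1}^n |y(\xi_i)|\,|\psi_i(\xi)| \le \Vert y \Vert_\infty \sum_{i=1}^n |\psi_i(\xi)| = \Vert y \Vert_\infty\,\lambda(\xi).
\end{equation*}
Taking the supremum over $\xi$ yields $\Vert \Pi y \Vert_\infty \le \Lambda \Vert y \Vert_\infty$, and hence $\Vert \Pi \Vert_{\mathrm{op}(\Pi)} \le \Lambda$.

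For the reverse inequality I would realize the supremum of $\lambda$, up to arbitrary precision, by a carefully chosen test function of unit norm. Fix $\varepsilon > 0$ and choose $\xi^\star \in \mathbb{R}^m$ with $\lambda(\xi^\star) > \Lambda - \varepsilon$; this is possible by definition of the supremum, whether or not it is attained. The natural candidate is a function $y$ with $\Vert y \Vert_\infty \le 1$ that reproduces the signs $s_i \coloneqq \mathrm{sign}(\psi_i(\xi^\star))$ at the nodes, since then
\begin{equation*}
    \Pi y(\xi^\star) = \sum_{i=1}^n s_i\,\psi_i(\xi^\star) = \sum_{i=1}^n |\psi_i(\xi^\star)| = \lambda(\xi^\star).
\end{equation*}
As $\xi_1,\ldots,\xi_n$ are finitely many distinct points of $\mathbb{R}^m$, such a continuous bounded $y$ exists: one takes disjoint balls around the nodes and glues together bump functions equal to $s_i$ at $\xi_i$ and decaying to zero outside, keeping the global sup-norm at most one. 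This gives $\Vert \Pi \Vert_{\mathrm{op}(\Pi)} \ge \Vert \Pi y \Vert_\infty \ge \Pi y(\xi^\star) = \lambda(\xi^\star) > \Lambda - \varepsilon$, and letting $\varepsilon \to 0$ closes the argument.

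The step I expect to be the main obstacle is the construction of the test function $y$ in the lower bound: it must lie in $L_\infty(\mathbb{R}^m) \cap C(\mathbb{R}^m)$, be bounded by one in sup-norm, and match the prescribed sign pattern at the nodes. In a compact polynomial-interpolation setting this is routine, but here the domain is $\mathbb{R}^m$ and $\mathcal{B}$ may consist of kernel translates, so I would take care that (a) the supremum $\Lambda$ need not be attained — which is precisely why I pass to an $\varepsilon$-argument rather than fixing a maximizer — and (b) nodes where $\psi_i(\xi^\star) = 0$ impose no constraint and can be ignored. Verifying that the bump-function construction keeps $\Vert y \Vert_\infty \le 1$ while achieving the target values is the only genuinely technical point; everything else follows from the triangle inequality and the cardinal property $\psi_\ell(\xi_i) = \delta_{\ell i}$.
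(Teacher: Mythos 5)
Your proposal is correct and follows the same two-step strategy as the paper's proof: the triangle inequality gives $\Vert \Pi \Vert_{\mathrm{op}(\Pi)} \le \Lambda$, and a continuous unit-norm test function matching the signs of the cardinal functions at the nodes gives the reverse inequality. The one place where you genuinely diverge is in how the lower bound is finished, and your version is in fact the more careful one. The paper considers the set $F$ of continuous functions with $f(\xi_i)=\pm 1$ and $\Vert f \Vert_\infty = 1$, picks for \emph{each} $\xi$ some $f \in F$ with $\left|\sum_{i} f(\xi_i)\psi_i(\xi)\right| = \sum_i |\psi_i(\xi)|$, and then ``passes to the supremum'' to conclude $\Vert \Pi f \Vert_\infty = \Lambda$ for a single $f$ — but that step tacitly assumes one sign pattern works for all $\xi$ (or that the supremum is attained at some $\xi^\star$ whose sign pattern one can fix), which is not justified on an unbounded domain where $\Lambda$ need not be attained. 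Your $\varepsilon$-argument — pick $\xi^\star$ with $\lambda(\xi^\star) > \Lambda - \varepsilon$, build a bump-function $y$ with $y(\xi_i) = \mathrm{sign}(\psi_i(\xi^\star))$ and $\Vert y \Vert_\infty \le 1$, conclude $\Vert \Pi \Vert_{\mathrm{op}(\Pi)} > \Lambda - \varepsilon$, and let $\varepsilon \to 0$ — closes exactly this gap, and your explicit gluing construction also supplies the existence of the sign-matching function in $L_\infty(\mathbb{R}^m) \cap C(\mathbb{R}^m)$, which the paper merely asserts by writing down the set $F$. So: same route, but your write-up repairs the one loose step in the paper's own argument.
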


\begin{proof}
    We first show that $ \Vert \Pi \Vert_{\mathrm{op}(\Pi)} \leq \Lambda $. We have that
    \begin{align*}
        \Vert \Pi \Vert_{\mathrm{op}(\Pi)} &\coloneqq \sup_{\Vert f \Vert_\infty = 1} \Vert \Pi f \Vert_\infty = \sup_{\Vert f \Vert_\infty = 1} \Vert \sum_{i=1}^n f(\xi_i) \psi_i \Vert_\infty \\
        & = \sup_{\Vert f \Vert_\infty = 1} \sup_{\xi \in \mathbb{R}^m} \left| \sum_{i=1}^n f(\xi_i) \psi_i (\xi) \right| \leq \sup_{\Vert f \Vert_\infty = 1} \Vert f \Vert_\infty \sup_{\xi \in \mathbb{R}^m}  \sum_{i=1}^n | \psi_i (\xi) | \\
        & = \sup_{\xi \in \mathbb{R}^m}  \sum_{i=1}^n | \psi_i (\xi) | = \Lambda.
    \end{align*}
    To prove that $ \Vert \Pi \Vert_{\mathrm{op}(\Pi)} = \Lambda $, it is now sufficient to exhibit a bounded function $ f \in L_\infty (\R^m ) \cap C (\R^m) $ such that $ \Vert \Pi f \Vert_{\infty} = \Lambda $. For, consider the set
    $$ F\coloneqq \{ f \in L_\infty(\mathbb{R}^m) \cap C(\mathbb{R}^m) \ \mid \ f(\xi_i) = 1 \text{ or } f(\xi_i) = -1, \ \| f \|_\infty =1 \}.$$
    Then, for every $ \xi $, there exists $ f \in F $ such that
    $$ \left| \sum_{i=1}^n f(\xi_i) \psi_i (\xi) \right| = \sum_{i=1}^n | \psi_i (\xi) | .$$
    Passing to the supremum we get
    $$ \Vert \Pi f \Vert_\infty = \sup_{\xi \in \mathbb{R}^m} \left| \sum_{i=1}^n f(\xi_i) \psi_i (\xi) \right| = \sup_{\xi \in \R^m} \sum_{i=1}^n | \psi_i (\xi) | = \Lambda, $$
    whence the claim.
\end{proof}

\subsection{Error bound of the regularized solution}

The interpolant $ \Pi y $ belongs to the finite dimensional space $ \mathcal B $. Being $ \cal B $ finite dimensional, it can be equipped with a norm that is equivalent to a Hilbert space norm. Hence, the usual regularization theory can be applied to this space and to its preimage via the injective operator $A$, which has in turn a comparable Hilbert-like structure, up to paying the equivalence constant between the two norms. Hence, let us go back to the original problem. Considering the intermediate interpolation step, estimating the error now amounts to bounding the quantity
\begin{equation*}
\Vert x^\dagger - R_\alpha \left(\Pi y \right) \Vert_{\cal X} \leq \Vert x^\dagger - R_\alpha \left( y \right) \Vert_{\cal X} + \Vert R_\alpha \left( y \right) - R_\alpha \left( \Pi y \right) \Vert_{\cal X} ~,
\end{equation*}
where $x^\dagger$ is the (unknown) solution of the inverse problem \eqref{eq:eq_pb_inv}.
The first term at the right hand side of above equation is peculiar to the regularization $ R_\alpha $ chosen for $ A $. The second one, if the regularization $ R_\alpha $ is linear, can be handled using results of the previous sections. 
From now on we stick with this hypothesis.

\begin{theorem}\label{theo:error_estimate} 
    Let $y\in L_\infty(\mathbb{R}^m)$ and let $y^{\star} \in \mathcal{B}$ be its best approximation with respect to the  $\|\cdot\|_\infty$ norm. Then
    \begin{equation*}
    \Vert R_\alpha y - R_\alpha \Pi y  \Vert_{\cal X}
    \leq \underbrace{\norm{R_\alpha}_{\mathrm{op} (R_\alpha)}\left\|y - y^{\star}\right\|_\infty}_{{\mathcal{E}(y,{\cal B},R_\alpha)}} \underbrace{\left( 1+ \sup_{\xi \in \R^m} \lambda(\xi) \right)}_{{\cal E}(\tilde{\Xi},\Xi,{\cal B})},
    \label{eq:stixIP}
    \end{equation*}
    where $\lambda(\xi)$ has been defined in \eqref{eq:characterizationLebesgue}.
The dependence on $ \tilde \Xi $ is given in Proposition \ref{prop:norminterpolator}.
\end{theorem}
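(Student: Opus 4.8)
The plan is to reduce the statement to two ingredients: the linearity of $R_\alpha$, which collapses the left-hand side into a single application of $R_\alpha$ to the interpolation residual $y - \Pi y$, and a Lebesgue-type inequality that controls $\|y - \Pi y\|_\infty$ through the best-approximation error $\|y - y^\star\|_\infty$ and the Lebesgue constant already computed in Proposition~\ref{prop:norminterpolator}.

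First I would invoke linearity of $R_\alpha$ to write $R_\alpha y - R_\alpha \Pi y = R_\alpha(y - \Pi y)$ and bound
\[
\|R_\alpha y - R_\alpha \Pi y\|_{\mathcal X} \leq \|R_\alpha\|_{\mathrm{op}(R_\alpha)}\,\|y - \Pi y\|_\infty ,
\]
interpreting the operator norm of $R_\alpha$ with respect to the sup-norm on its domain. This is legitimate because $\mathcal B$ is finite dimensional and all norms on it are equivalent, as recalled just before the statement, while $y-\Pi y$ lies in the $L_\infty$ space on which $R_\alpha$ is assumed to act. This single step isolates the whole error into the factor $\|y-\Pi y\|_\infty$.

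The second step is the Lebesgue inequality. Since $\Pi$ is a projector onto $\mathcal B$, it reproduces every element of $\mathcal B$, so in particular $\Pi y^\star = y^\star$. Inserting $y^\star$ and using the triangle inequality,
\[
\|y - \Pi y\|_\infty \leq \|y - y^\star\|_\infty + \|\Pi(y^\star - y)\|_\infty .
\]
Bounding the last term by the operator norm of $\Pi$ and recalling from Proposition~\ref{prop:norminterpolator} that $\|\Pi\|_{\mathrm{op}(\Pi)} = \sup_{\xi\in\R^m}\lambda(\xi)$ gives
\[
\|y - \Pi y\|_\infty \leq \left(1 + \sup_{\xi \in \R^m} \lambda(\xi)\right)\|y - y^\star\|_\infty .
\]
Combining the two displays yields exactly the claimed bound, with the two bracketed factors identified as ${\cal E}(y,\mathcal B,R_\alpha)$ and ${\cal E}(\tilde\Xi,\Xi,\mathcal B)$.

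I expect the only genuinely delicate point to be the very first inequality: one must make sure the operator norm of $R_\alpha$ is taken with the sup-norm on the domain and that the residual $y-\Pi y$ indeed belongs to the space on which $R_\alpha$ acts continuously. This is precisely where the remark preceding the theorem does the work, namely that $\mathcal B$ is finite dimensional and carries a norm equivalent to a Hilbert norm, so that linear regularization theory transfers up to the equivalence constant. Everything after that is the classical Lebesgue-constant estimate, and the idempotency identity $\Pi y^\star = y^\star$ is the only structural fact about $\Pi$ that is needed.
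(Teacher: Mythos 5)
Your proposal is correct and follows essentially the same route as the paper's proof: linearity of $R_\alpha$, the reproduction property $\Pi y^\star = y^\star$ to rewrite the residual as $(\mathrm{Id}-\Pi)(y-y^\star)$, and Proposition \ref{prop:norminterpolator} to identify $\Vert \Pi \Vert_{\mathrm{op}(\Pi)}$ with the Lebesgue constant. The only difference is cosmetic --- you apply the bound on $\Vert R_\alpha \Vert_{\mathrm{op}(R_\alpha)}$ first and then the Lebesgue-type inequality, whereas the paper carries out the algebraic factorization inside the norm before bounding both operators at once.
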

\begin{proof}
    Since $\Pi y^\star = y^\star$, using the linearity of $ R_\alpha $ we obtain
\begin{align*}
\Vert R_\alpha y - R_\alpha \Pi y \Vert_{\cal X} 
& = 
\Vert R_\alpha( y- \Pi y) \Vert_{\cal X} 
= \Vert R_\alpha( y - y^\star + y^\star - \Pi y) \Vert_{\cal X} \nonumber \\
& = \Vert R_\alpha( y - y^\star + \Pi y^\star - \Pi y) \Vert_{\cal X} \\
&= \Vert R_\alpha( \mathrm{Id} - \Pi ) (y - y^\star)  \Vert_{\cal X} \\
& \leq \Vert R_\alpha \Vert_{\mathrm{op}(R_\alpha)} \Vert y - y^\star \Vert_{\cal Y} (1 + \Vert \Pi \Vert_{\mathrm{op}(\Pi)} ).
\end{align*}
In the above equation, $ \Vert \cdot \Vert_{\mathrm{op}(R_\alpha)} $ denotes the induced norm on the space of continuous linear functionals $ \mathcal{L} ({\cal Y}, {\cal X}) $, while $ \Vert \cdot \Vert_{\mathrm{op}(\Pi)}$ is the operator norm computed with respect to $ \Vert \cdot \Vert_{\cal Y}$.
\end{proof}

\begin{remark}
The norm of the linear projection operator $ \Pi $ is also an indicator of the stability in the presence of noisy data. Indeed, if $ \Vert y - \tilde y \Vert \leq \varepsilon $, then
$$ \Vert \Pi y  - \Pi \tilde{y
}  \Vert \leq \varepsilon \Vert \Pi \Vert_{\mathrm{op}} .$$
In particular, this uncertainity propagates to the regularized solution as
$$ \Vert R_\alpha \Pi y - R_\alpha \Pi \tilde y \Vert_{\cal X} \leq \varepsilon \Vert R_\alpha \Vert_{\mathrm{op} (R_\alpha)} \Vert \Pi \Vert_{\mathrm{op} (\Pi)} .$$
\end{remark}

\begin{remark}
In the case where the regularizing operator $R_\alpha$ is non-linear, we assume that it is Frech\'et differentiable in $y$, i.e., that there exists a bounded linear operator $D_{R_\alpha}(y)$ such that
\begin{equation*}
\lim_{\Vert t \Vert_{\mathcal{Y}} \to 0} \frac{\Vert R_\alpha(t+y) - R_\alpha (y) - D_{R_\alpha}(y) \,t \Vert_{\mathcal{X}}}{\Vert t \Vert_{\mathcal{Y}}} = 0 ~.
\end{equation*}
Therefore, there exists a neighborhood $U$ of $y$ such that, for any $t\in \mathcal{Y}$ such that $t+y \in U$, $\Vert R_\alpha(t+y) - R_\alpha (y) - D_{R_\alpha}(y) t \Vert_{\mathcal{X}} \leq \Vert t \Vert_{\mathcal{Y}}$.
Substituting $t$ with $\Pi y - y $, and assuming that $\Pi y \in U$, we have 
\begin{equation*}
\Vert R_\alpha(\Pi y) - R_\alpha(y) - D_{R_\alpha}(y) (\Pi y - y)\Vert_\mathcal{X} \leq  \Vert \Pi y - y\Vert_\mathcal{Y} ~.
\end{equation*}
Therefore, by using the reverse triangular inequality, one has
\begin{equation*}
\Vert R_\alpha(\Pi y) - R_\alpha(y) \Vert_\mathcal{X} \leq \Vert D_{R_\alpha} (y) (\Pi y - y) \Vert_\mathcal{X} + \Vert \Pi y - y\Vert_\mathcal{Y} ~.
\end{equation*}
By using again the fact that $\Pi y - y = (\mathrm{Id} -\Pi) (y  - y^\star)$, we obtain 
\begin{equation*}
\Vert R_\alpha(\Pi y) - R_\alpha(y) \Vert_\mathcal{X} \leq  (1+\Vert \Pi \Vert_{\mathrm{op} (\Pi)} ) (1+\Vert D_{R_\alpha}(y) \Vert_{\mathrm{op} (D_{R_\alpha}(y))} ) \Vert y - y^\star\Vert_{\mathcal{Y}}  ~,
\end{equation*}
which allows extending the error estimate of Theorem \ref{theo:error_estimate}.

\end{remark}

From Proposition \ref{prop:norminterpolator} one immediately deduces that, with broad generality, greedy methods can be used to select nodes with a small Lebesgue constant, so that the term depending on $ \Vert \Pi \Vert_{\mathrm{op} (\Pi)} $ is minimized in the error bound. This is done in polynomial interpolation, where greedy strategies are employed for the extraction of Fekete nodes and Leja sequences \cite{Fekete}. The situation simplifies in the case of kernels, as we shall see in the forthcoming section.

\subsection{Error-based greedy: focus on kernels}

Among many interpolation spaces, we now focus on the one that naturally adapts to this context.  Precisely, in the framework of kernel interpolation \cite{Wendland05,Buhmann03,Fasshauer15}, the space $ \mathcal{B} $ can be chosen in such a way that it bears a two-fold structure. The first one is compatible with the infinity norm, when $ b_i \in L_{\infty} (\R^m) \cap C (\R^m) $ and the quantity $ \Lambda $ is finite even if the domain is unbounded. Indeed, owing to the definition given by the last equality of \eqref{eq:characterizationLebesgue}, one has
$$ \Lambda = \sup_{\xi \in \R^m} \sum_{j=1}^n | \psi_j (\xi) | = \sup_{\xi \in\R^m} \left\vert \sum_{\ell=1}^n (\mPP^{-1})_{j \ell} b_\ell (\xi) \right\vert \leq \Vert  \mPP^{-1} \Vert_\infty \cdot n \cdot \max_{\ell=1,\ldots,n} \Vert b_\ell \Vert_\infty .$$
Notice now that $\Vert b_\ell \Vert_\infty $ is bounded when $ b_\ell \in L_\infty (\R^m) $ and that 
$\Vert \mPP^{-1}  \Vert_\infty < \infty$
since 
$\mPP$ is an interpolation matrix. An example of such kernels is provided by the Matérn or the Gaussian kernels \cite{Matern}.

The second structure induced by kernels exploits an inner product, which allows us to work within the framework of Hilbert spaces and to use a preferred norm on this space, which is directly inherited from the general theory of kernel interpolation. We may see $ \mathcal{B} $ as generated by a kernel $ \kappa $ such that $ b_i (\cdot) = \kappa (\cdot, \xi_i) $. This gives to $ \mathcal B $ an explicit Hilbert structure.  Indeed, kernels  bear an inner product and associated norm  $\lVert  \cdot \lVert_{{\cal N}} = \sqrt{\langle \cdot, \cdot \rangle}$ that define their Reproducing Kernel Hilbert Spaces ${\cal N}$ (RKHS), also known as native spaces. RKHSs are so that for any $ f \in {\cal N}$ 
\begin{itemize}
    \item $\kappa(\cdot, \boldsymbol{x}) \in {\cal N}$,  
    \item $f(\boldsymbol{x})=\langle f, \kappa(\cdot,\boldsymbol{x}) \rangle$.
\end{itemize}
Since $ \mathcal B $ is finite dimensional, $ \Vert \cdot \Vert_{\mathcal N} $ is clearly equivalent to any other norm induced by $ Y $ on $ \mathcal B $. With these preliminaries, our projection assumes the form: 
\begin{equation*} 
    \Pi y  = \sum_{i=1}^n c_i \kappa(\cdot,{\xi}_i),
\end{equation*}
where the coefficients are uniquely determined by solving \eqref{matrix_interp}.

Classical pointwise error bounds for kernel-based interpolants simplify the generic computations of the previous sections. In fact, they are of the form
\begin{equation*}
    |y(\xi)-\Pi y (\xi)| \leq {\cal P} (\xi) \| y  \|_{{\cal N}}, \hskip 0.15cm y \in {{\cal N}}, 
\end{equation*}
where ${\cal P}$, known as \emph{power function}, is defined by 
\begin{align*}
    {\cal P}^2(\xi) = k(\xi,\xi)-b(\xi)^{\intercal}\mPP^{-1} b(\xi), 
\end{align*}
being $b(\xi) = (k(\xi, \xi_1), \ldots, k(\xi, \xi_n ))^{\intercal}$ and 
$$ \Vert y \Vert_{\mathcal{N}} = (y_1, \ldots, y_n) \mPP^{-1}\begin{pmatrix} y_1 \\ \vdots \\ y_n \end{pmatrix}. $$

Hence, for kernels, we can use the power function as error indicator, instead of the Lebesgue constant. This gives
\begin{equation*}
    \Vert R_\alpha y - R_\alpha \Pi y \Vert_{\cal X}
    \leq \underbrace{\norm{R_\alpha}_{\mathrm{op} (R_\alpha)}\left\|y \right\|_{\cal N}}_{{\mathcal{E}(y,{\cal B},R_\alpha)}} \underbrace{ \Vert {\cal P} \Vert_{\infty}}_{{\cal E}(\tilde{\Xi},{\cal B})},
\end{equation*}

In other words, we can apply the error-based greedy scheme by using at the power function as error estimate in Algorithm \ref{alg:alg2}. This results in an ease of implementation and a gain in computational cost compared to the non-kernel case.

\subsection{Visualization of the scheme}
Before showing our numerical results, we exemplify the greedy scheme for inversion tasks in Figure \ref{fig:scheme}. In the first figure, we select the relevant samples via either the error-based or residual-based schemes (red circles). Then, the corresponding kernel-based interpolant is constructed above such points, and evaluated in a larger number of nodes (black stars), usually a grid. Finally, a regularization is then applied on such data to reconstruct the sought solution.

\begin{figure}[H]
    \centering
		\begin{tikzpicture}[scale=0.4]
			\pgfmathsetseed{42}
			\def\s{4}
			\def\Nred{30}
			\def\Nblue{70}
			\def\margin{0.05}
			\def\dx{10}
			\def\Nredcircle{10}
			\begin{scope}[shift={(0,0)}]
				\draw (-\s,-\s) rectangle (\s,\s);
				\foreach \i in {1,...,\Nred}{
					\pgfmathsetmacro{\x}{\margin*\s + rand*(1 - 2*\margin)*\s}
					\pgfmathsetmacro{\y}{\margin*\s + rand*(1 - 2*\margin)*\s}
					\fill[blue] (\x,\y) circle (1.5pt);
				}
                \foreach \i in {1,...,\Nredcircle}{
					\pgfmathsetmacro{\x}{\margin*\s + rand*(1 - 2*\margin)*\s}
					\pgfmathsetmacro{\y}{\margin*\s + rand*(1 - 2*\margin)*\s} 
			      \draw[red] (\x,\y) circle[radius=1.5mm];
                 \fill[blue] (\x,\y) circle (1.5pt);
				}
			\end{scope}
			\draw[->, thick, bend left=20] (\s+0.05,0) to (\dx - \s-0.05,0);
			\begin{scope}[shift={(\dx,0)}]
				\draw (-\s,-\s) rectangle (\s,\s);
                \foreach \i in {1,...,\Nredcircle}{
					\pgfmathsetmacro{\x}{\margin*\s + rand*(1 - 2*\margin)*\s}
					\pgfmathsetmacro{\y}{\margin*\s + rand*(1 - 2*\margin)*\s} 
			      \fill[red] (\x,\y) circle[radius=1.5mm];
				}
				\foreach \i in {1,...,\Nblue}{
					\pgfmathsetmacro{\x}{\margin*\s + rand*(1 - 2*\margin)*\s}
					\pgfmathsetmacro{\y}{\margin*\s + rand*(1 - 2*\margin)*\s}
					\node at (\x,\y) {*};
				}
			\end{scope}
			\draw[->, thick, bend left=20] (\dx + \s+0.05, 0) to (\dx*2 - \s-0.05, 0);
			\begin{scope}[shift={(\dx*2,0)}]
				\draw[step=0.4,thin,gray!60] (-\s,-\s) grid (\s,\s);
				\draw[thick] (-\s,-\s) rectangle (\s,\s);
				\shade[inner color=blue!70, outer color=white] (1.5,2) circle (1.2);
			\end{scope}
			
		\end{tikzpicture}
    \caption{The scheme: combination of kernel interpolation and regularization. In the first image, the most important samples are selected (first image, red circles). This gives an interpolant, which is then evaluated at more points (second image, black stars). A regularization is then applied on such data to reconstruct the desired image (third image).}
    \label{fig:scheme}
\end{figure}
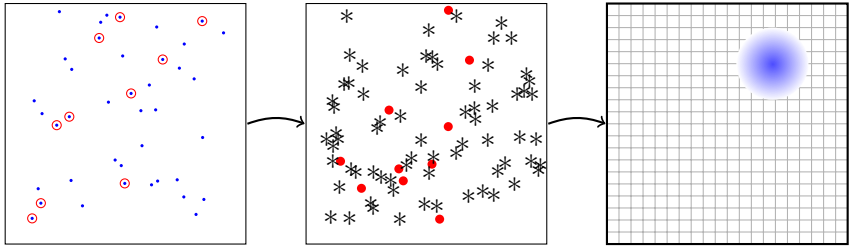

\section{Numerical Experiments: Applications to Solar Imaging} \label{sect:numericalresult}

We validate the theoretical results on an inverse imaging problem using synthetic data generated for the STIX instrument. STIX is a telescope that measures the hard X-ray radiation emitted by solar flares, i.e., sudden bursts of radiation occurring in the Sun atmosphere. The data provided by STIX, known as visibilities $Y = \{ y(\xi_i) \}_{i=1}^N$, consist of samples of the two-dimensional Fourier transform of the incident X-ray photon flux \cite{massa2023stix}.

Thus, the STIX imaging problem, which consists in recovering the spatial distribution of hard X-ray sources from the corresponding Fourier measurements, can be formulated as the ill-posed inverse problem in \eqref{eq:eq_pb_inv}, where the forward operator $A$ is replaced by the Fourier transform $\mathcal{F} \colon L_1(\mathbb{R}^2) \to L_\infty(\mathbb{R}^2) \cap C(\mathbb{R}^2)$. Several reconstruction techniques have been proposed for this task (see, e.g., \cite{piana2022hard}), including interpolation and extrapolation approaches (see, e.g., \cite{perracchione2021feature, Perracchione_2021}), commonly referred to as uv\_smooth in the hard X-ray imaging literature. In particular, its current implementation employs variably scaled kernels \cite{Bozzini1}, which are known to potentially improve reconstructions for oscillating target structures \cite{Perracchionepolito}, while regularization is performed using the Landweber scheme, refer e.g. to \cite{piana1997projected}.

For the experiments presented here, we consider a set of $N = 400$ Fourier frequencies, given by Fibonacci nodes $\Xi = \{\xi_i\}_{i=1}^N$ (left panel of Figure \ref{fig:sampling_points_P}). Fibonacci nodes have been shown to be an effective choice for interpolation-based techniques in the context of hard X-ray solar flare imaging \cite{perracchione2021feature}. Further, as discussed in Section \ref{sect:scheme}, a larger sampling set is required to properly validate our proposed greedy strategies for the first time.

\begin{figure}[H]
    \centering
    
    \includegraphics[width=0.45\textwidth]{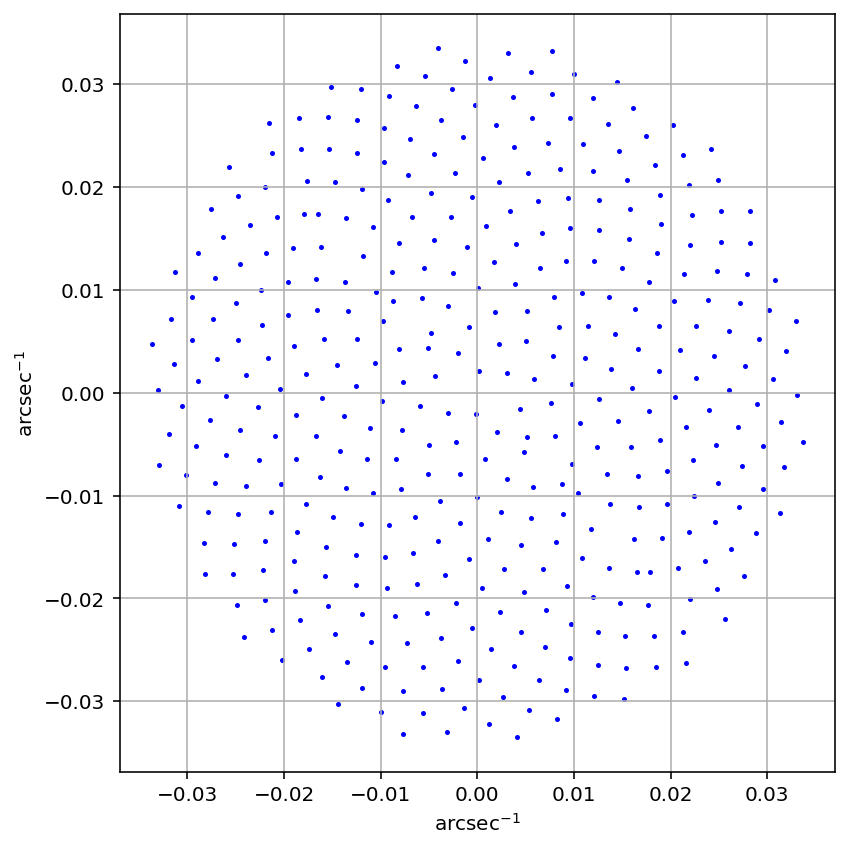}
    \includegraphics[width=0.45\textwidth]{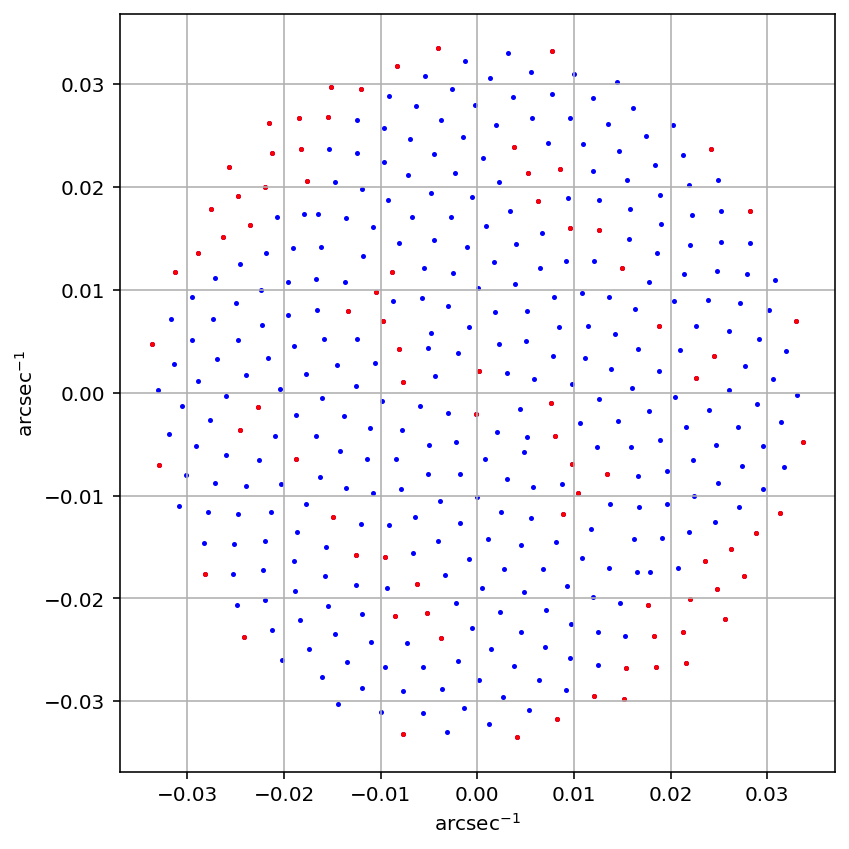}
 
    \caption{The full set of Fibonacci nodes used in our experiments is shown in blue in both the left and right panels, whereas the subset of frequencies selected by the error-based greedy strategy is highlighted in red in the right panel.}
    \label{fig:sampling_points_P}
\end{figure}

Given the limited information provided by hard X-ray telescopes, the reconstructed images of solar flare sources are relatively simple and can typically be represented using either two-dimensional Gaussian functions or bent Gaussian functions (referred to as loops). Accordingly, in the experiments below, we simulate STIX data corresponding to three prototypical solar flare morphologies: a single Gaussian source (hereafter \emph{single}), a double Gaussian configuration (hereafter \emph{double}), and a loop-shaped structure (hereafter \emph{loop}). The corresponding ground-truth images are shown in Figure \ref{fig:ground_truth}.

As far as the imaging methods are concerned, we utilize methods already tested in the hard X-ray imaging literature. We compare the results of the greedy strategies of:
\begin{itemize}
    \item Our interpolation/extrapolation procedure  hereafter referred to as \textbf{uv\_smooth} in accordance with astrophysical terminology;
    \item The maximum entropy method  \textbf{MEM\_GE} \cite{Massa_2020};
    \item The classical  \textbf{Clean} de-convolution algorithm which is inherited from the field of solar radio interferometry \cite{Clean}. 
\end{itemize}

\begin{figure}[H]
    \centering
    \includegraphics[width=\textwidth]{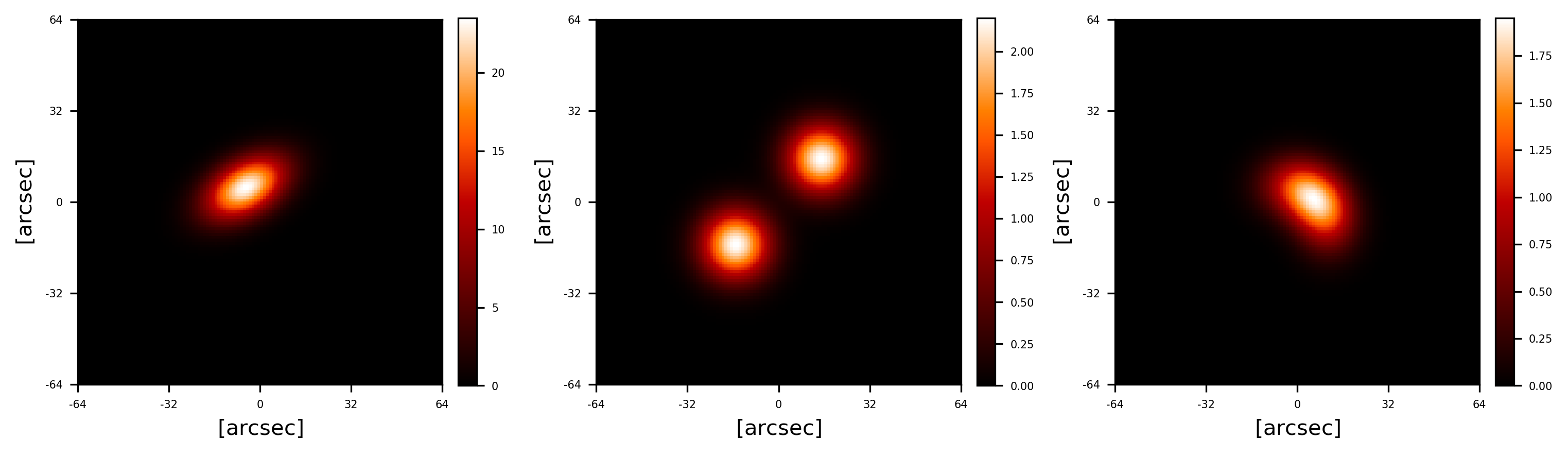}
    \caption{Ground truth images of the simulated flare morphologies: \emph{single}, \emph{double}, and \emph{loop} (from left to right, respectively).}
    \label{fig:ground_truth}
\end{figure}

Our numerical experiments can be reproduced using the code available at 
\begin{center}
\url{https://github.com/MatteTro/GreedyIP}
\end{center}
\subsection{Results}

In the case of the error-based greedy approach, we apply Algorithm \ref{alg:alg2} to the set of Fibonacci nodes and terminate the iterative procedure once a total of $n = 80$ frequencies has been selected. This technique is random and depends on the initial frequency $\xi_1$. Nevertheless, the error-based greedy approach proves to be stable with respect to the initialization and consistently selects a subset of frequencies that is uniformly distributed across the original set of Fibonacci nodes (see the right panel of Figure \ref{fig:sampling_points_P}). Such a distribution prevents regions of the Fourier domain from being overlooked and ensures that the different components or variations present in the original data remain represented within the subsample. This behavior is not unexpected, as the subset is chosen independently of any specific flare data and therefore corresponds to a configuration of frequencies that yields near-optimal performance in a general setting for our interpolation/extrapolation procedure.

\begin{figure}

\includegraphics[width=\textwidth]{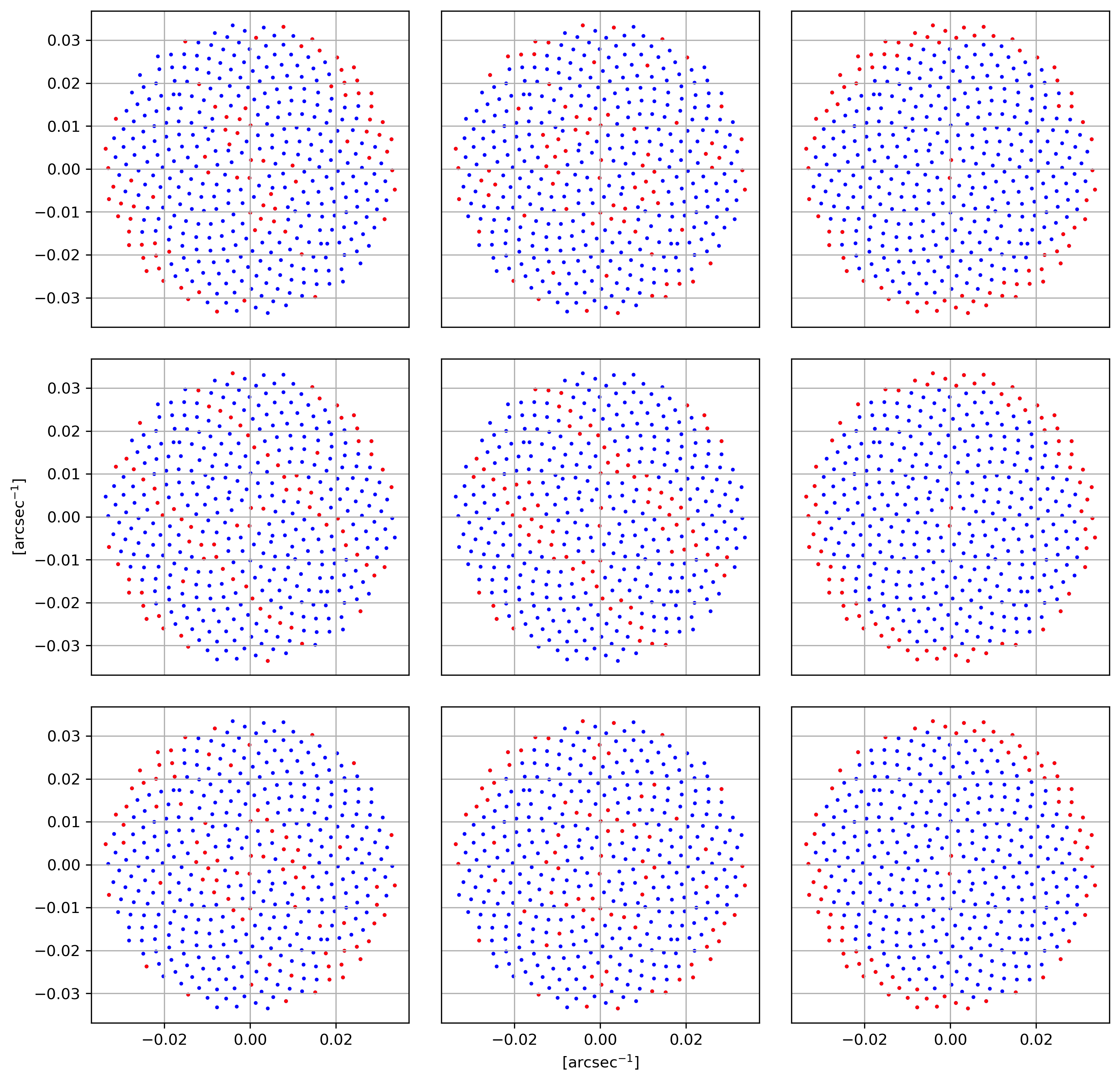}
\caption{Sampling points selected by the \textit{residual-based} greedy strategy for the \emph{single}, \emph{double}, and \emph{loop} configuration (top, middle, and bottom row, respectively).
From left to right, the panels correspond to the sampling subsets chosen when using 
the uv\_smooth, MEM\_GE, and Clean.}
\label{fig:fig:sampling_points_F}

\end{figure}

\begin{figure}[!t]
    \includegraphics[width=\textwidth]{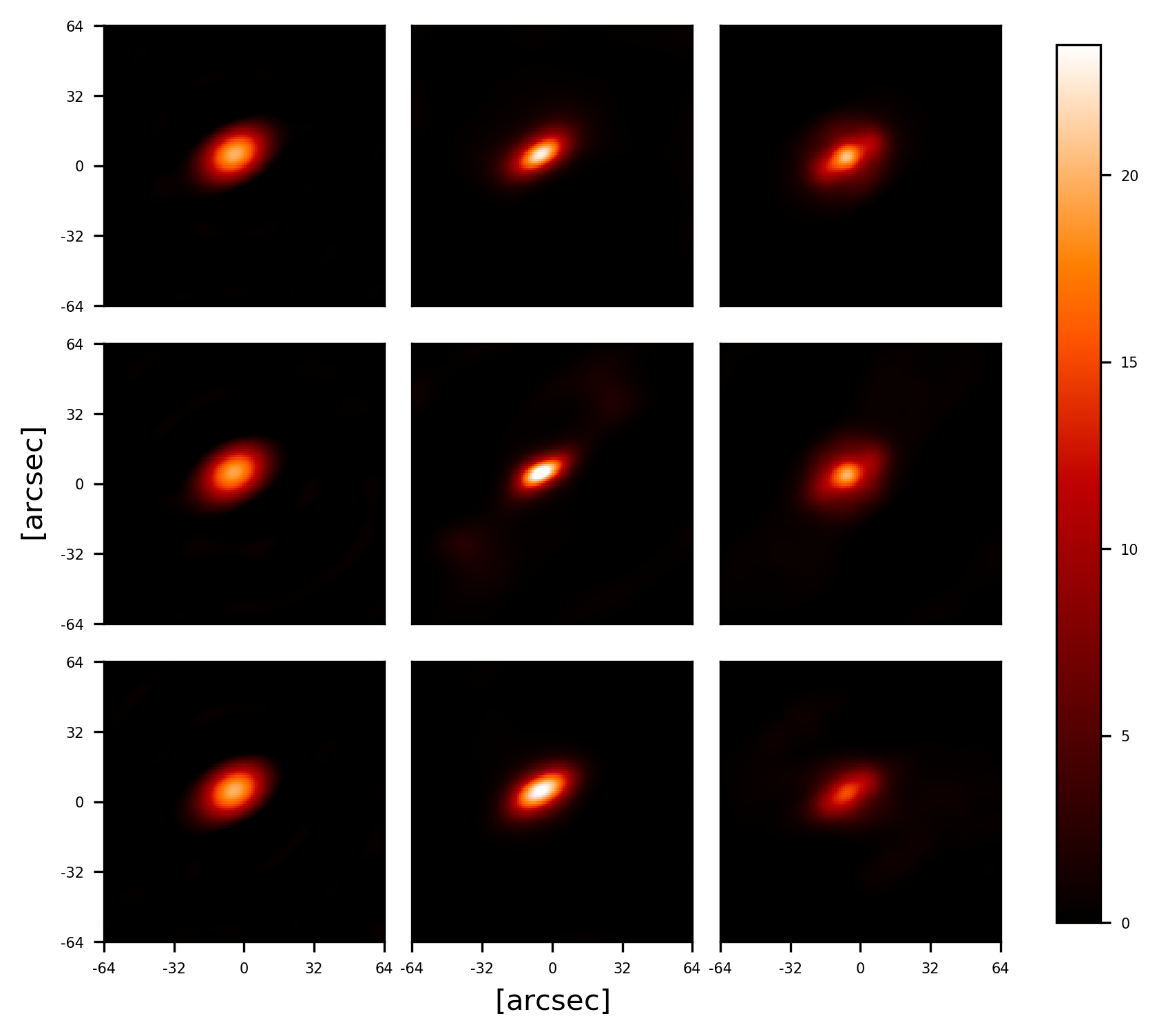}
    \caption{Reconstructed images in the case of the \emph{single} configuration. 
    Columns correspond to the three reconstruction methods: uv\_smooth, MEM\_GE, and Clean (from left to right, respectively). 
    Rows show the reconstruction obtained using (top) all sampling points,  (middle) the subset selected by the error-based greedy method, and (bottom) the subset selected by the residual-based greedy method.}
    \label{fig:Reconstructions_OneSource}
\end{figure}

\begin{figure}[!t]
    \includegraphics[width=\textwidth]{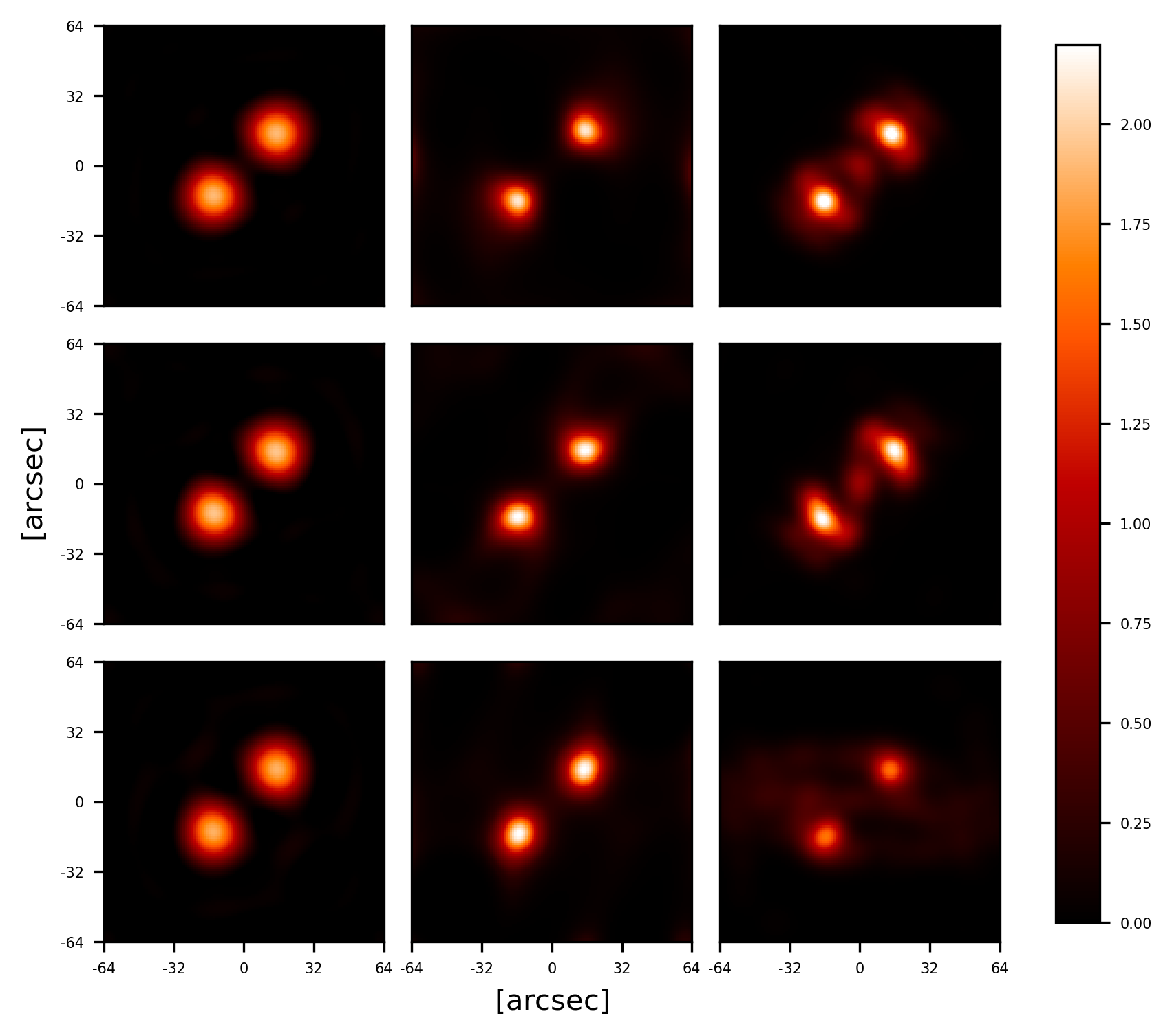}
    \caption{Reconstructed images in the case of the \emph{double} configuration. 
    Columns correspond to the three reconstruction methods: uv\_smooth, MEM\_GE, and Clean (from left to right, respectively). 
    Rows show the reconstruction obtained using (top) all sampling points,  (middle) the subset selected by the error-based greedy method, and (bottom) the subset selected by the residual-based greedy method.}
    \label{fig:Reconstructions_TwoSources}
\end{figure}

\begin{figure}[!t]
    \includegraphics[width=\textwidth]{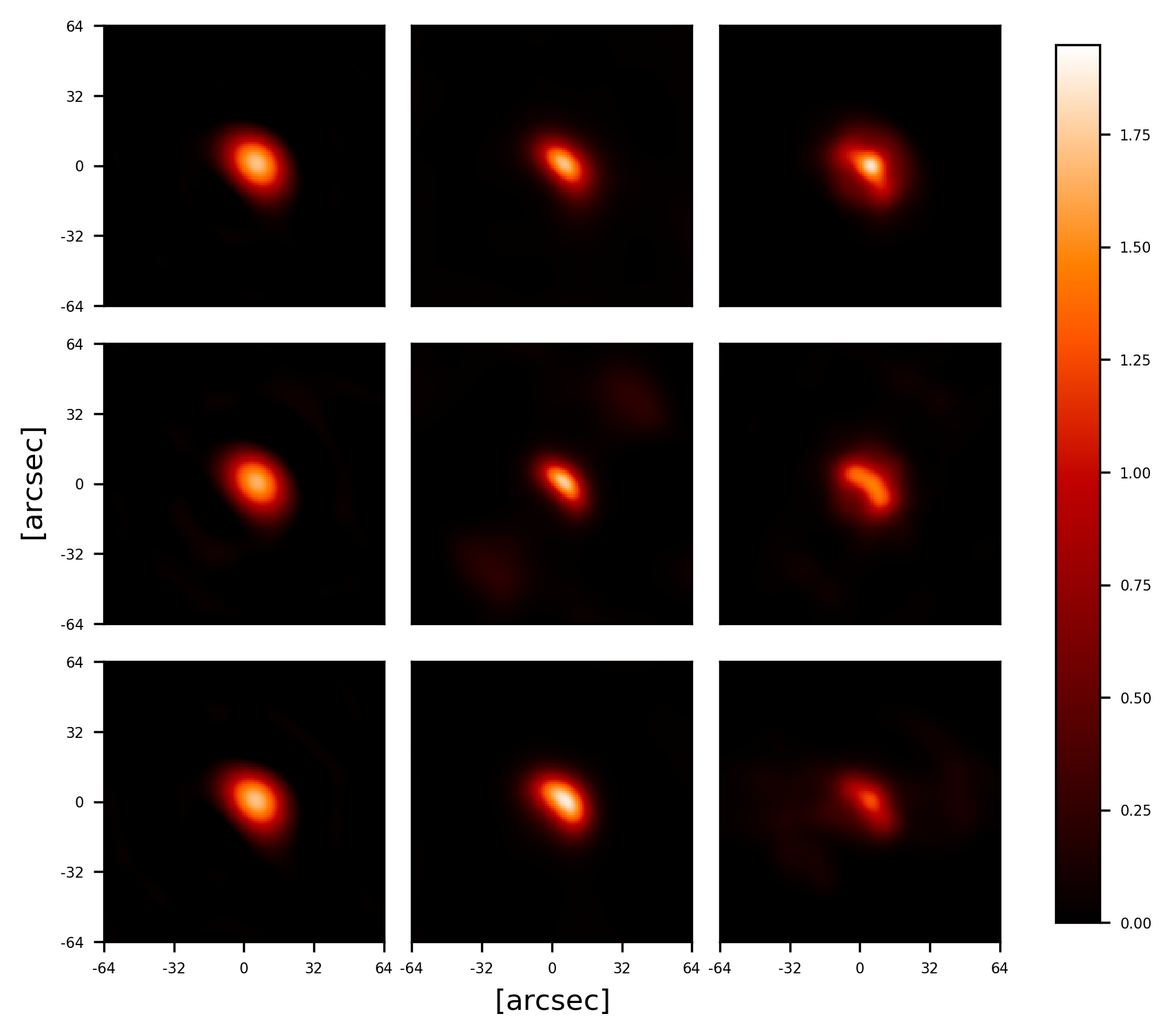}
    \caption{Reconstructed images in the case of the \emph{loop} configuration. 
    Columns correspond to the three reconstruction methods: uv\_smooth, MEM\_GE, and Clean (from left to right, respectively). 
    Rows show the reconstruction obtained using (top) all sampling points,  (middle) the subset selected by the error-based greedy method, and (bottom) the subset selected by the residual-based greedy method.}
    \label{fig:Reconstructions_Loops}
\end{figure}

Differently, the residual-based greedy technique yields an optimal set of frequencies tailored to both the specific image reconstruction algorithm and the particular flare data under consideration. We applied Algorithm \ref{alg:alg1} to simulated data corresponding to the \emph{single}, \emph{double}, and \emph{loop} configurations, and to each image reconstruction algorithm under consideration.

The resulting sets of selected frequencies are shown in Figure \ref{fig:fig:sampling_points_F}. 
It can be noted that the selected frequencies are different for every method and every ground truth configuration.
We also note a consistency between the frequencies selected by uv\_smooth and MEM\_GE, while Clean produces a more scattered distribution, with a stronger emphasis on peripheral regions.

In Figures \ref{fig:Reconstructions_OneSource}, \ref{fig:Reconstructions_TwoSources}, and \ref{fig:Reconstructions_Loops}, we compare the reconstructions obtained using all Fibonacci points with those obtained using the subsets of frequencies selected by the error-based and residual-based greedy approaches, for the single, double, and loop configurations, respectively. The three columns in each figure display the reconstructions produced by uv\_smooth, MEM\_GE, and Clean.

Reconstructed images highlight the influence of the sampling strategy. The residual-based greedy selection yields more targeted reconstructions that tend to preserve the main source structure more effectively and produce fewer artifacts. In contrast, the error-based selection exhibits a slight decrease in reconstruction quality compared to the residual-based approach. However, its quality remains comparable to that obtained without any greedy selection. Naturally, this still represents a significant improvement, as the number of considered frequencies is substantially reduced. Moreover, the error-based greedy algorithm can be executed once and for all, providing a clear advantage in terms of computational cost.

As for uv\_smooth, the main differences appear in the artifacts along the outer regions of the reconstructed flare: these are visible for both greedy strategies but remain relatively mild and do not compromise the overall structure.
Further, as already noted in the literature, the \emph{double} configuration remains a critical case for this method, as it tends to merge the two sources.
MEM\_GE maintains good performance under both sampling strategies, although error-based selection introduces slightly more distortion and results in weaker separation between the two sources. 

While MEM\_GE is known to be a robust technique, the performance of Clean may be affected by a suboptimal choice of the width of the Gaussian beam used in the final step of the algorithm to convolve the identified point sources. There is no mathematically rigorous rule to justify this choice, and heuristic considerations are typically adopted. Clean, which already provides inaccurate reconstructions when all Fibonacci nodes are considered, shows a further decrease in performance when frequencies are removed, particularly in the case of the error-based greedy approach.

Finally, the error-based greedy strategy represents a good choice for uv\_smooth, whereas both MEM\_GE and Clean may be negatively impacted by a configuration tailored solely for the interpolation/extrapolation method.

To provide a quantitative assessment of the reliability of the proposed greedy strategies, we report in Table \ref{tab:metrics} the values of several metrics, namely the $\chi^2$, Root Mean Squared Error (RMSE), and Mean Relative Error (MRE):
\begin{eqnarray*}
 \chi^2 &\coloneqq& \dfrac{1}{N} \sum_{i=1}^N \dfrac{\vert y(\xi_i) - (\mathcal{F} M_{\tilde{\Xi},x})(\xi_i) \vert}{\sigma_i^2}~, \\
\mathrm{RMSE} &\coloneqq& \sqrt{\frac{1}{N} \sum_{i=1}^N \vert y(\xi_i) - (\mathcal{F} M_{\tilde{\Xi},x})(\xi_i) \vert^2} ~, \\
\mathrm{MRE} & \coloneqq & \dfrac{1}{N} \sum_{i=1}^N \dfrac{\vert y(\xi_i) - (\mathcal{F} M_{\tilde{\Xi},x})(\xi_i) \vert}{\vert y(\xi_i) \vert} ~,
\end{eqnarray*}
where $\tilde{\Xi}$ can be either the complete set of Fibonacci frequencies or the subset selected by greedy algorithms, and $\sigma_i$ is the uncertainty related to the absolute value of the $i$-th visibility.
The three metrics compare the simulated visibilities corresponding to \emph{all} Fibonacci nodes with those predicted from the reconstructions obtained with the different methods. 

As for uv\_smooth, considering the frequencies selected by the residual-based greedy approach leads to a systematic improvement in the metric values, with the only exception being the $\chi^2$ metric in the case of the \emph{double} configuration. This result highlights the advantage of adopting a frequency-selection strategy tailored to this imaging method and the specific data at hand. 
Notably, using the frequencies selected by the error-based greedy approach yields results comparable to (and sometimes even better than) those obtained with all Fibonacci points, despite only one fifth of the data being considered. The improvement (or even the stabilization) with respect to the $ \chi^2 $ metric is remarkable, as none of the greedy methods is directly targeted at it. This can also be done, but carries a large computational cost.

MEM\_GE and Clean provide suboptimal results for the \emph{single} configuration, particularly when using all Fibonacci frequencies or the error-based greedy frequencies. The performance of the maximum entropy method improves significantly when residual-based frequencies are employed; however, the same improvement is not observed for Clean, which may be affected by a suboptimal choice of Gaussian beam width, as discussed above.

Similar observations apply to Clean in the \emph{double} and \emph{loop} configurations, although these cases are less critical than the \emph{single} configuration. In the same configurations, MEM\_GE suffers when frequencies selected by the error-based greedy approach are used, showing almost systematically lower accuracy than both uv\_smooth and Clean. However, the maximum entropy method again shows notable improvement when residual-based greedy frequencies are adopted, providing results comparable to uv\_smooth.

\begin{table}[!t]
\centering
\caption{Data-fitting metrics for different sampling strategies across three test cases.}
\resizebox{\textwidth}{!}{%
\begin{tabular}{ccc}
\begin{tabular}{lccc}
\multicolumn{4}{c}{\emph{Single} configuration} \\ \hline
\textbf{Sampling} & $\chi^2$ & RMSE & MRE \\
\hline
\textbf{All points} & & & \\
uv\_smooth & 1.02 & 0.10 & 0.09 \\
MEM\_GE    & 7.49 & 0.22 & 0.21 \\
Clean    & 6.71 & 0.30 & 0.27 \\
\hline
\textbf{Error-based} & & & \\
uv\_smooth & 1.39 & 0.08 & 0.08 \\
MEM\_GE    & 15.69 & 0.25 & 0.20 \\
Clean    & 9.09 & 0.32 & 0.29 \\
\hline
\textbf{Residual-based} & & & \\
uv\_smooth & 0.82 & 0.08 & 0.07 \\
MEM\_GE    & 0.42 & 0.05 & 0.04 \\
Clean    & 20.34 & 0.40 & 0.38 \\
\hline
\end{tabular}
&
\begin{tabular}{lccc}
\multicolumn{4}{c}{\emph{Double} configuration} \\ \hline
\textbf{Sampling} & $\chi^2$ & RMSE & MRE \\
\hline
\textbf{All points} & & & \\
uv\_smooth & 0.17 & 0.48 & 0.21 \\
MEM\_GE    & 2.04 & 1.10 & 0.54 \\
Clean    & 0.67 & 0.31 & 0.28 \\
\hline
\textbf{Error-based} & & & \\
uv\_smooth & 0.27 & 0.79 & 0.21 \\
MEM\_GE    & 1.09 & 0.40 & 0.25 \\
Clean    & 0.60 & 0.35 & 0.58 \\
\hline
\textbf{Residual-based} & & & \\
uv\_smooth & 0.27 & 0.44 & 0.21 \\
MEM\_GE    & 0.77 & 0.42 & 0.25 \\
Clean    & 4.01 & 0.80 & 0.58 \\
\hline
\end{tabular}
&
\begin{tabular}{lccc}
\multicolumn{4}{c}{\emph{Loop} configuration} \\ \hline
\textbf{Sampling} & $\chi^2$ & RMSE & MRE \\
\hline
\textbf{All points} & & & \\
uv\_smooth & 0.08 & 0.08 & 0.07 \\
MEM\_GE    & 0.93 & 0.25 & 0.24 \\
Clean    & 0.55 & 0.30 & 0.27 \\
\hline
\textbf{Error-based} & & & \\
uv\_smooth & 0.21 & 0.11 & 0.10 \\
MEM\_GE    & 2.45 & 0.34 & 0.29 \\
Clean    & 0.76 & 0.33 & 0.30 \\
\hline
\textbf{Residual-based} & & & \\
uv\_smooth & 0.07 & 0.07 & 0.06 \\
MEM\_GE    & 0.14 & 0.10 & 0.09 \\
Clean    & 2.92 & 0.46 & 0.45 \\
\hline
\end{tabular}
\end{tabular}
\label{tab:metrics}
}
\end{table}

\section{Conclusions and future work} \label{sect:conclusion}
We presented a greedy framework for inverse problems, emphasizing two sampling strategies: residual-based and error-based. Residual-based selection efficiently reduces the discrepancy between predicted and observed data, while error-based selection allows direct control over reconstruction error with explicit bounds. Numerical results on STIX solar imaging show that both strategies achieve high-quality reconstructions with a limited number of measurements. The error-based approach, in particular, provides a theoretically grounded criterion for optimal measurement selection. This highlights the potential of greedy methods for efficient and reliable sparse reconstructions.

Future work consists in taking into account possible noise on the measurements and studying how the uncertainties propagate on the sought solution.

\end{document}